\documentclass[12pt,reqno]{amsart}

\advance \topmargin by -\headheight
\advance \topmargin by -\headsep
\evensidemargin \oddsidemargin
\usepackage[T1]{fontenc}
\usepackage[utf8]{inputenc}
\usepackage{amsmath,amsfonts,amssymb,amsthm}
\usepackage{mathtools}
\usepackage{bm}
\usepackage{enumerate}
\usepackage{booktabs}
\usepackage{microtype}
\usepackage{needspace}
\usepackage{bbm}

\numberwithin{equation}{section}

\newtheorem{theorem}{Theorem}[section]
\newtheorem{proposition}[theorem]{Proposition}
\newtheorem{lemma}[theorem]{Lemma}
\newtheorem{corollary}[theorem]{Corollary}
\newtheorem{fact}[theorem]{Fact}

\theoremstyle{definition}

\newtheorem{example}[theorem]{Example}
\theoremstyle{remark}

\newcommand{\F}{\mathbb F}
\newcommand{\Z}{\mathbb Z}

\newcommand{\e}{\mathbbm{1}}

\title[Projection and fibering]
{Projection and fibering in groups of bounded exponent}
\author{Yifan Jing}
\address{Department of Mathematics, Ohio State University, Columbus, OH USA}
\email{jing.245@osu.edu}

\author{Zuxiang Kong}
\address{Department of Mathematics, Ohio State University, Columbus, OH USA}
\email{kong.570@osu.edu}

\author{Souktik Roy}
\address{Optiver, Chicago, IL USA}
\email{souktik@gmail.com}
\date{}

\subjclass[2020]{11B30, 11B75, 20D15, 20F18}

\begin{document}

\begin{abstract}

We develop a projection and fibering method for sets of small combinatorial doubling in (not necessarily abelian) discrete groups.  As an application, in the abelian case we prove that, if $A$ is finite, the ambient group has exponent $r$, and $|A+A|\leq K|A|$, then
\[
 |\langle A\rangle|\leq r^{(2+o(1))K}|A|.
\]
This answers a question of Ruzsa with optimal leading coefficient, independent to Fox--Pham. The main ingredient is a discrete version of a fiber spillover argument.

For sets in $2$-step nilpotent groups of exponent $r$, we also prove that $|A^3|\leq K|A|$ implies  $|\langle A\rangle|\leq r^{(2+o_K(1))K}|A|$.
The proof combines the abelian theorem with a weighted averaging of central fibers and commutators. 
\end{abstract}

\maketitle

\section{Introduction}

The purpose of this paper is to study how combinatorial doubling of sets performs under suitable projections and fiberings, with applications to Freiman type theorems additive combinatorics. More precisely, let $A\subseteq G$ be a set of small combinatorial doubling, that is $\sigma(A):=|A+A|/|A|\leq K$. Here we assume $A$ is finite and $G$ is abelian for a moment. Let $\pi:G\to H$ be a group homomorphism, and it is well-known that $\sigma(A)\leq K$ does not imply $\sigma(\pi(A))\leq K$. Nevertheless, $\sigma(\pi(A))$ is still under control, and this has been made quantitatively sharp recently by Peng, Tran, and the second author~\cite{KPT24} that $\sigma(\pi(A))\le K^2$. 

Our first proposition is a simple refinement of Theorem 5.1 of \cite{KPT24}. Here $G$ need not to be abelian. 
\begin{proposition}\label{prop: 1}
    Let $G$ be a group, $A,B\subseteq G$ are finite sets such that $|AB|\leq K|B|$. Let $\pi: G\to Q$ be a group homomorphism. Then there is a level set $L\subseteq \pi(B)$ such that $|\pi(A)L|\leq K|L|$. 
\end{proposition}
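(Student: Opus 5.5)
Throughout, a \emph{level set} of $\pi(B)$ means a set of the form
\[
L_t=\{q\in \pi(B): |B\cap \pi^{-1}(q)|\ge t\}
\]
for some threshold $t>0$. The plan is a layer-cake (level set) decomposition of $B$ along the fibers of $\pi$. For $q\in Q$ write $b_q=|B\cap\pi^{-1}(q)|$. Then $|B|=\sum_q b_q=\int_0^\infty |L_t|\,dt$. The goal is to show that
\[
|AB|\ \ge\ \int_0^\infty |\pi(A)L_t|\,dt .
\]
Once this holds, the hypothesis $|AB|\le K|B|$ gives $\int_0^\infty\bigl(|\pi(A)L_t|-K|L_t|\bigr)\,dt\le 0$. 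Hence some $t$ with $L_t\neq\emptyset$ satisfies $|\pi(A)L_t|\le K|L_t|$. Only finitely many distinct level sets occur, namely those with $t$ among the values $b_q$, so the integral is really a finite sum and picking such a $t$ is legitimate.

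The key step is a fiberwise lower bound for $AB$. Fix $p\in\pi(A)L_t$, say $p=\pi(a)q$ with $a\in A$ and $q\in L_t$. Left multiplication by $a$ is injective and maps $B\cap\pi^{-1}(q)$ into $AB\cap\pi^{-1}(p)$. Therefore
\[
|AB\cap \pi^{-1}(p)|\ \ge\ \max_{q\in \pi(B),\ \pi(a)q=p \text{ for some } a\in A} b_q .
\]
Call the right-hand side $m_p$. Then $p\in\pi(A)L_t$ exactly when $m_p\ge t$. Summing over the fibers of $AB$ gives
\[
|AB|=\sum_p |AB\cap\pi^{-1}(p)|\ \ge\ \sum_p m_p=\sum_p\int_0^\infty \e[m_p\ge t]\,dt=\int_0^\infty|\pi(A)L_t|\,dt .
\]
Non-commutativity causes no trouble here: we only use left translation by a single element $a$, and that $\pi$ is a homomorphism, so $\pi(ab)=\pi(a)\pi(b)$.

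I do not expect a serious obstacle. The one point needing care is the identification $\{p: m_p\ge t\}=\pi(A)L_t$. This is immediate, because $q\in L_t$ means precisely $b_q\ge t$. The rest is confirming that the averaging argument produces a nonempty level set, which the finite-sum remark above handles.
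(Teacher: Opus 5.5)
Your proof is correct and is essentially the paper's argument. The paper also notes that each $z\in\pi(A)L_j(B)$ has fiber $AB\cap\pi^{-1}(z)$ containing a left translate of a $B$-fiber of size at least $j$. It deduces $\sum_{j\ge1}|\pi(A)L_j(B)|\le|AB|$, compares with $|B|=\sum_{j\ge1}|L_j(B)|$, and averages, just as you do; the only difference is that you integrate over real thresholds $t$ (and spell out why the chosen level set is nonempty) where the paper sums over integer thresholds $j\ge1$.
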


The proof use the spillover argument~\cite[Proposition 5.3]{AJTZ} which is refined by~\cite{KPT24}. 

As an application of Proposition~\ref{prop: 1}, we answer a question of Ruzsa~\cite{Ruzsa99} on bounding the size of subgroup generated by a set of bounded doubling in a bounded exponent abelian group. Here the exponent of a group $G$ is the least positive integer $r$ such that $g^r=1$ (or $rg=0$ when abelian) for every $g\in G$. Write $\langle A\rangle$ for the group generated by $A$. 

\begin{theorem}\label{thm:main}
Let $G$ be a finite abelian group with exponent $r$, where $r\geq2$, and let $A\subseteq G$ be finite and nonempty.  If
$|A+A|\leq K|A|$, then
\begin{equation}\label{eq: bound}
 |\langle A\rangle|\leq r^{(2+o_K(1))K}|A|.
\end{equation}
\end{theorem}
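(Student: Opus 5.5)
We argue by induction on $|\langle A\rangle|/|A|$, or equivalently on the rank of $\langle A\rangle$, using Proposition~\ref{prop: 1} to pass to a quotient and then fibering. Translating, we may assume $0\in A$, so $\langle A\rangle$ is generated by $A-A$. Set $H=\langle A\rangle$. The aim is to find an element $x\in A$ and a quotient map $\pi:H\to H/\langle x\rangle$ that make the doubling of a suitable large piece of $A$ drop by a definite amount. The exponent is paid for by $|\langle x\rangle|\le r$.

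\textbf{Step 1: choose the projection.} If $A+A=A$, then $A$ is a coset and we are done. Otherwise pick $x\in A-A$ such that the fibers of $A$ along $\langle x\rangle$ are "thin" in the right sense, and let $\pi:H\to H/\langle x\rangle$. Apply Proposition~\ref{prop: 1} with $B=A$ to get a level set $L\subseteq\pi(A)$ with $|\pi(A)+L|\le K|L|$. Here $L$ consists of those $\pi(a)$ whose fiber $A\cap(a+\langle x\rangle)$ has size at least some threshold $t$. Counting $|A+A|$ fiber by fiber, a standard Plünnecke/fiber-counting step shows two things: (i) the set $A'=A\cap\pi^{-1}(L)$ satisfies $|A'|\ge |A|/K'$ for a controlled $K'$; and (ii) $\sigma(L)$, and in fact $|\pi(A)+L|/|L|$, is at most $K-c$ for an absolute $c$ close to $1/2$. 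The saving comes because each fiber sumset $A_y+A_{y'}$ has size at least $\max(|A_y|,|A_{y'}|)$, and in nontrivial directions strictly more. Iterating this, each use of a generator costs a factor $r$ in $|\langle A\rangle|/|A|$ and lowers $K$ by about $1/2$. This gives roughly $2K$ steps, hence $r^{(2+o(1))K}$.

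\textbf{Step 2: recursion.} Apply the induction hypothesis to $L\subseteq \pi(H)$ to get $|\langle L\rangle|\le r^{(2+o(1))(K-1/2)}|L|$. Then lift:
\[
|\langle A\rangle|\le r\,|\langle \pi(A)\rangle|\le r\,|\pi(A)+L|\cdot \tfrac{|\langle L\rangle|}{|L|}.
\]
Here $\pi(A)+L$ has to be controlled inside a bounded number of cosets of $\langle L\rangle$ via Ruzsa covering. We also need $|L|\le |A|/t$ with $t$ comparable to the typical fiber size, so that $|L|$ is not much larger than $|A|/r$.

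\textbf{Main obstacle.} The hardest part is making the loss per step exactly a factor $r^{1+o(1)}$ per unit decrease of $K/2$, rather than $r^{O(1)}$. This is the discrete "fiber spillover" mentioned in the abstract. One must show that if the doubling does not drop by about $1/2$ on projecting, then $A$ is nearly a union of full $\langle x\rangle$-cosets. In that case $|\langle A\rangle|/|A|$ is already controlled on the quotient with no loss of $r$. I would first try a dichotomy on the average fiber fill ratio $|A_y|/r$. When it is near $1$, we project losslessly. When it is small, Proposition~\ref{prop: 1} applied to the heaviest level set yields the $1/2$ drop. Choosing the threshold $t$ optimally should absorb the Ruzsa-covering losses into the $o_K(1)$ term.
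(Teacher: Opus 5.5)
There is a genuine gap. The inequality that is supposed to produce the constant $2$ is asserted, not proved, and the recursion built around it is mis-stated.

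\textbf{The lifting step is false.} The inequality $|\langle\pi(A)\rangle|\le|\pi(A)+L|\,|\langle L\rangle|/|L|$ does not hold. Covering $\pi(A)$ by $q\le|\pi(A)+L|/|L|$ cosets of $W=\langle L-L\rangle$ bounds the \emph{generated} group only by $r^{q-1}|W|$. For example, if $L=W$ is a subspace of $\F_2^n$ and $\pi(A)=W\cup\{y_1,y_2\}$ with $y_1,y_2$ independent modulo $W$, then $|\langle\pi(A)\rangle|=4|W|$, while your right-hand side is $3|W|$.

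\textbf{What a corrected induction needs.} Use the correct factor $r^{q-1}$ together with $|\pi(A)+L|\ge|L+L|+(q-1)|L|$. An induction with target $r^{2\sigma-2}|A|$ then closes at a step provided
\[
2(K-K')+(q-1)+\log_r\frac{|A|}{|L|}\ge 1,\qquad K'=\frac{|\pi(A)+L|}{|L|}.
\]
Your claim (ii) is one branch of this inequality and ``full fibers'' is another. The real content is the mixed regime, which your dichotomy does not address. Take $A=V\cup B\subseteq\F_2^n$, with $V$ a large subspace, $B$ a set of $k$ points independent modulo $V$, and $x\in V$. The level set $\pi(V)$ has ratio $k+1>K$, so $L=\pi(A)$ is forced. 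Then $K-K'=O(k^2/|V|)$ and $\log_2(|A|/|L|)=1-\Theta(k/|V|)$. So neither the $1/2$ drop nor lossless projection holds, and the step survives only because the two small effects combine.

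\textbf{Why the outline does not supply it.} Nothing in your outline proves such an inequality. The mechanism you cite, strict growth of fiber sums, is Cauchy--Davenport, which needs $|\langle x\rangle|$ prime. For composite $r$, fibers that are cosets of a proper subgroup of $\langle x\rangle$ have $|A_y+A_{y'}|=\max(|A_y|,|A_{y'}|)$.

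\textbf{Loss accumulation.} The number of projection steps is at least the rank of $\langle A\rangle$, which can be of order $\log_r|A|$, unbounded in $K$. So per-step losses must be summable. A fixed factor $C>1$ lost in each of the $\approx 2K$ doubling-reducing steps already turns $2K$ into $(2+2\log_r C)K$. Ruzsa-covering constants therefore cannot be ``absorbed by choosing $t$''.

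\textbf{The paper's route.} The paper does not induct.
\begin{enumerate}
\item Sanders' Green--Ruzsa theorem (Fact~\ref{fact: SGR}) places $A$ in $e^{o(K)}$ translates of a coset progression $P$ of dimension $o(K)$. Hence $H=\langle P\rangle$ satisfies $|H|\le r^{o(K)}|A|$, and $A$ has at most $e^{o(K)}$ images in $Q=G/H$. This disposes at once of all the ``full fiber'' directions your iteration would treat one at a time.
\item For each prime $p\mid r$ it projects to the $\F_p$-space $Q/pQ$, so no composite-order fibers arise, and takes the level set $L$ of Proposition~\ref{prop: 1}.
\item It applies the sharp finite-field theorem (Fact~\ref{fact:finite-field}) once, to $W=\langle L-L\rangle$. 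The remaining $q-1$ cosets are paid for by $\sigma(L)+q-1\le K$.
\item This gives $\dim_{\F_p}Q/pQ\le(2+o(1))K$ (Lemma~\ref{lem: bound dim}). Then $d(Q)=\max_p\dim_{\F_p}Q/pQ$ (Fact~\ref{fact: structure ab}) yields $|Q|\le r^{(2+o(1))K}$.
\end{enumerate}

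An inductive proof like yours appears in the paper only for $\F_2^n$, in the appendix. There the invariant is $\log_2|A|+2\sigma(A)$ rather than $K$ alone, and the displayed trade-off is proved with $L=\pi(A)$, $q=1$ and a carefully chosen $x$. The paper does not carry this out for general $r$ without Sanders' input; it only remarks that it believes this is possible.
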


Ruzsa proved $r^{K^4}$ bound for \eqref{eq: bound} in \cite{Ruzsa99} and he conjectured the actual bound should be $r^{O(K)}$. The bound was improved to $K^2r^{2K^2-2}$ later by Green and Ruzsa~\cite{GR06}. The $\mathbb F_2^n$ case was proven by Green and Tao~\cite{GT09} by introducing a compression argument. The sharp result was obtained by Even-Zohar~\cite{EZ12} and generalized to prime exponents by  Even-Zohar and Lovett \cite{EZL14}, both use the compression argument. Ruzsa's conjecture received many attention in the literature, see for example \cite{DJHP, Diao, HP, Konyagin, Sanders}. It  worth noting that the compression argument requires the existence of a certain lexicographic order in the ambient group, and such an order cannot exist when the exponent is not prime, so the general case requires new argument.

It worth noting that the proof of Theorem~\ref{thm:main} applies Proposition~\ref{prop: 1} to reduce the problem to 
the known finite fields results. However, our projection method can be used to provide a new proof (that is, compression-free) of the earlier known results for finite fields, see discussions in the appendix. 

In 2024, a connection of Ruzsa's conjecture to graph colorings was made by Fox and Pham and they announced a graph theory proof of Theorem~\ref{thm:main}. See the discussion after Theorem 1.8 in \cite{CFPY} and discussions in Section 1.1. Their paper is not yet available when the current paper is written. 

We also want to mention that the constant $2$ in the exponent of the right hand side of \eqref{eq: bound} is optimal. Indeed when  $A=\{0,e_1,\ldots,e_d\}\subseteq(\Z/r\Z)^d$ is a simplex, we have $|A+A|/|A|=(d+2)/2$ and $|\langle A\rangle|=r^d$. 

We next consider the corresponding spanning problem in a nonabelian group. Small doubling alone does not give a bound depending only on the doubling constant and the exponent, even in nilpotency class two, an example is given in Section~6. Small tripling is a natural replacement.

\begin{theorem}\label{thm: main 2-nil}
Let $G$ be a finite $2$-step nilpotent group and exponent $r\geq2$, and let $A\subseteq G$ be nonempty. If
\[
 |A^3|\leq K|A|,
\]
then
\begin{equation}\label{eq: main bound nil}
 |\langle A\rangle|\leq r^{(2+o_K(1))K+2}|A|.
\end{equation}
\end{theorem}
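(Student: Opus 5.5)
We reduce to the abelian Theorem~\ref{thm:main} by projecting to the abelianization modulo the center, and then control the central fiber using commutators. Replacing $G$ by $\langle A\rangle$, we may assume $G=\langle A\rangle$. Let $Z=Z(G)$ and $\pi:G\to G/Z$. Since $G$ is $2$-step nilpotent, $[G,G]\le Z$, so $Q=G/Z$ is abelian of exponent dividing $r$. Then
\[
|G| = |\pi(G)|\cdot|Z| = |\langle\pi(A)\rangle|\cdot |Z|,
\]
and the task splits into two parts. The first is to bound $|\langle \pi(A)\rangle|$ in terms of the doubling of a suitable subset of $\pi(A)$. The second is to bound $|Z|$ in terms of the size of a typical fiber $A\cap xZ$.

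\textbf{Step 1 (projection).} Apply Proposition~\ref{prop: 1} with the pair $(A,A)$, or with $(A^2,A)$ using $|A^3|\le K|A|$. This gives a level set $L\subseteq\pi(A)$, consisting of the points $q$ whose fiber $|A\cap\pi^{-1}(q)|$ lies in a dyadic or threshold range, such that $|\pi(A)+L|\le K|L|$. In particular $|L+L|\le K|L|$. Applying Theorem~\ref{thm:main} in $Q$ gives $|\langle L\rangle|\le r^{(2+o(1))K}|L|$. The index $[\langle\pi(A)\rangle:\langle L\rangle]$ is at most $|\pi(A)+L|/|L|\le K$, which is absorbed into $r^{o(K)}$, or better, is handled by a Ruzsa-covering argument. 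We choose $L$ as the set of heavy fibers, meaning fibers of size at least about $|A|/(K|\pi(A)|)$ on a weighted average, so that $|L|\cdot m\gtrsim |A|/K^{O(1)}$, where $m$ is the minimal fiber size on $L$.

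\textbf{Step 2 (fiber growth).} Fix $q\in L$ and let $F=A\cap\pi^{-1}(q)=a_0F_0$ with $F_0\subseteq Z$ and $|F_0|\ge m$. For $a\in A$, the set $aF$ lies in a single coset of $Z$ and has size $|F|$. For $b,c\in A$ with $\pi(b)=\pi(c)$, we have $bc^{-1}\in Z$. We want to show that $A^3$ contains $|Z|/r^{O(1)}$-many elements in each of many fibers. Elements $[a,b]\in[G,G]\le Z$ arise from $aba^{-1}b^{-1}$, but we only have $A^3$, not $AA^{-1}$. We use instead $ab$ versus $ba$ for $a,b\in A$: the products $ab$ and $ba$ lie in the same $Z$-coset and differ by $[a,b]$. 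A weighted averaging over pairs in $L\times L$ of the central fibers $\{x\in A^2 : \pi(x)=q\}$ should show that the fiber of $A^3$ over a typical point is comparable to $|F_0\cdot C|$, where $C$ is a large set of commutators. Since $[\cdot,\cdot]$ is bilinear on $Q$ with values in $Z$, the commutators $[a,b]$ for $a,b\in A$ generate $[G,G]$. Moreover, $Z/[G,G]$ is generated by $A$'s central parts, which are differences within fibers. The aim is to show $|A^3\cap\pi^{-1}(q')|\ge |Z|/r^{2}$ for many $q'$, which would give $|A^3|\ge |Z|\,|\pi(A)|\,r^{-2}$. Combining this with Step~1 yields
\[
|G|\le r^{(2+o(1))K}\,|L|\,|Z|\le r^{(2+o(1))K+2}|A|,
\]
where the $+2$ absorbs the loss from the fiber step.

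\textbf{Main obstacle.} The hard part is Step~2: showing that a bounded number of products fill the center up to an $r^{O(1)}$ loss. The fiber sets $F_0$ need not generate $Z$, and the commutator set need not be a subgroup. My first attempt is to run Step~1 once more inside $Z$, which is abelian of exponent $r$. Let $S=\bigcup_q F_0(q)\cup\{[a,b]\}$, weighted by fiber sizes. Show that a heavy level set $S'$ of $S$ satisfies $|S'+S'|\le K'|S'|$ with $K'$ bounded in terms of $K$ but not growing in the exponent. Then apply Theorem~\ref{thm:main}; the resulting factor $r^{O_K(1)}$ must be shown to be only $r^{o(K)+2}$. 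Keeping this secondary loss sublinear in $K$ is where I expect the real difficulty to lie.
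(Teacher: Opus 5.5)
\textbf{There is a genuine gap.} Your two-step structure cannot close, however Step~2 is attempted.

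\emph{Step~1 already spends the whole budget.} You apply Theorem~\ref{thm:main} to $L$ with the tripling constant $K$, which costs $r^{(2+o(1))K}$. The index bound $[\langle\pi(A)\rangle:\langle L\rangle]\le|\pi(A)+L|/|L|$ is also false. For $L=\{0\}$ and $\pi(A)$ a simplex of $K$ points the ratio is $K$, but the index is $r^{K-1}$. The correct count, as in the proof of Lemma~\ref{lem: bound dim}, is that $\pi(A)$ meets $q\le1+K-\sigma(L)$ cosets of $\langle L-L\rangle$. Ruzsa covering likewise only gives index $r^{K}$, not $r^{o(K)}$.

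\emph{The required Step~2 bound is false.} Step~2 would have to show $|Z|\le r^{o(K)+2}|A|/|L|$, e.g.\ via your target $|A^3\cap\pi^{-1}(q')|\ge|Z|/r^2$. Take the first example of Section~6, $A=\{1,x_1,\dots,x_d\}$ in the free $2$-step nilpotent group of exponent $p$. There $Z=\bigwedge^2V$ has size $p^{\binom d2}$, and every fiber of $A^3$ over $G/Z=V$ has at most $6$ points. All fibers of $A$ are singletons, so $L=\pi(A)$, and $K=d^2+1$. Hence the central loss is $r^{(1/2-o(1))K}$. The theorem survives there only because $\pi(A)$ has doubling $(d+2)/2\ll K$, i.e.\ Step~1 was very wasteful. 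So keeping the secondary loss $o(K)$ is not merely hard but impossible in your framework; the projection loss and the central loss must be traded against each other.

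\emph{Quotienting by $Z$ causes a further problem.} Working modulo $Z$ instead of $[G,G]$ breaks your claim that $Z/[G,G]$ is generated by fiber differences. Take $p$ odd, $x,y$ generating the Heisenberg group over $\F_p$, $w$ generating an extra central factor $\F_p$, and $A=\{x,y,xyw\}$. All $G/Z$-fibers of $A$ are singletons, yet $w\in Z\setminus[G,G]$.

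\textbf{The missing idea} is a single lower bound for $|A^3|$ in which the abelian doubling and the ranks of the central pieces draw on the same budget. The paper projects to the abelianization $H=N/[N,N]$, takes a largest fiber $a_0U_0$, and puts $W_0=\langle U_0\rangle$ and $W=\langle\bigcup_xU_x\rangle$.

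Proposition~\ref{nc:prop:joint} combines your own $ab$ versus $ba$ observation with the rank count of Lemma~\ref{lem: directions}. The sets $a_{x_i}a_{y_i}A_z$ and $a_{y_i}a_{x_i}A_z$ lie in $A^3$, in $W$-cosets differing by $[a_{x_i},a_{y_i}]$. Consequently:
\begin{enumerate}
\item each of the $m_p$ independent directions of $[N,N]/W$ yields an extra $W$-coset carrying at least $t$ points of $A^3$ over every point of a translate of $L_t$;
\item each of the $\ell_p$ independent directions of $W/W_0$ yields an extra $W_0$-coset carrying $|U_0|$ points over every point of a translate of $\pi(A)$.
\end{enumerate}
Dividing by $|A|$ gives $m_p+\rho\bigl(\sigma(\pi(A))+\sigma(U_0)-1+\ell_p\bigr)\le K$, where $\rho=|U_0||\pi(A)|/|A|$ (Corollary~\ref{cor: two estimates}).

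Only then is Theorem~\ref{thm:main} applied, to $\pi(A)$ and to $U_0$ with their own doubling constants rather than $K$. The identity $|N|/|A|=\rho\cdot\frac{|H|}{|\pi(A)|}\cdot\frac{|W_0|}{|U_0|}\cdot|[N,N]/W_0|$, together with a case split on whether $\rho\le2+\varepsilon$, then gives the stated bound.

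Your proposal contains the right local mechanism, but neither this counting nor the trade-off. As written, it does not prove the theorem.
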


Theorem~\ref{thm: main 2-nil} preserves the leading coefficient in the abelian estimate of Theorem~\ref{thm:main}. We do not claim that the coefficient $2$ is optimal, and the optimal coefficient is likely $1/2$, but the linear dependence on $K$ is optimal, see examples in Section~6. We also remark that replacing $|A^3|\leq K|A|$ by $K$-approximate groups will trivialize the problem (same in abelian settings) as the sharp exponent bound can be obtained via Ruzsa's calculus.

The case general when $G$ in a nonabelian group of bounded exponent is handled by Breuillard, Green, and Tao~\cite[Theorem 6.15]{BGT}. However their proof is not effective and uses Hrushovski's Lie model theorem~\cite{Hrushovski} in an essential way. It would be interesting to see whether bounded exponent case can be handled effectively without Lie model theorem.

\subsection{Abelian case: relation with the Fox--Pham graph theoretic approach}

 Conlon--Fox--Pham--Yepremyan \cite{CFPY} develops a properly edge coloring method for counting small product sets. Its stated additive dimension consequence is weaker than \eqref{eq: bound}. On the other hand, it was mentioned the sharp bounded exponent theorem is included in a subsequent work of Fox--Pham. Pham announced the result at the Edinburgh conference in 2024 and the first author was an audience of that talk. Pham sketched an argument during the talk based on $O(K)$ colors, expanding graph components, and small subsets which realize large sumsets.

The proof here has a different nature. The initial object is to study the difference between the doubling of the original sets and the doubling of its quotients in a quantitative way. The complete Fox--Pham manuscript is not publicly available when the paper is written down.

\subsection{Relation to earlier projects and the Statement of AI}
The project was initiate by the first and the third author back in 2018, and an earlier version of the manuscript contains an error which was later found by Zilin Jiang. The new input is replacing compression by a spillover argument, and the version of discrete groups was recently obtained by Peng, Tran, and the second author~\cite{KPT24}. The ideas are owned by the authors and the paper is written by the authors. ChatGPT is used for language polishing and mainly for error checking after the paper is written up. The figures are drawn by the authors using Ipe. 

\subsection*{Organization}

In Section 2 we record the preliminary inputs. Proposition~\ref{prop: 1} is proved in Section 3 and Theorem~\ref{thm:main} is proved in Section 4. In Section 5, we refine Proposition~\ref{prop: 1} and its corollaries for 2-step nilpotent groups. Theorem~\ref{thm: main 2-nil} is proven in Section 6 and some examples are discussed. In the appendix, we present a self-contained proof of Theorem~\ref{thm:main} for $\F_2^n$ using our projection method. 

\subsection*{Notation and convention}

For finite sets $A,B$ in an abelian group, we write
$A+B=\{a+b:a\in A,\ b\in B\},$
In a multiplicative group, we write $AB=\{ab:a\in A,b\in B\}$ and $A^{-1}=\{a^{-1}:a\in A\}$.  The subgroup generated by a set $A$ is $\langle A\rangle$. We use $\sigma(A)$ for the combinatorial doubling of $A$, that is $|A+A|/|A|$. All logarithms are natural logarithms. In both of the main theorems, the notation $O(K)$ hides an absolute constant and in particular it is uniform in the prime $p$, the exponent $r$, the group, and the set. In the paper we will not distinguish $\mathrm{Aff}(A)$ and $\langle A-A\rangle$ as they only differ by a factor at most $r$ and will be absorbed in the $o_K(1)$ term, we will simply use $\langle A\rangle$ for both. 

\subsection*{Acknowledgment} The authors thanks Zilin Jiang for pointing out an error in an earlier version of the manuscript, and Akshat Mudgal, Chieu-Minh Tran for  discussions on Ruzsa's problem. YJ is supported by NSF grant DMS-2503063.

\section{Preliminaries}\label{sec:preliminaries}

\subsection{Generator rank of abelian groups}

We use the following fact for finitely generated abelian groups. Here for an abelian group $G$ we use $d(G)$ for the minimum number of generators. 

\begin{fact}\label{fact: structure ab}
Let $G$ be a torsion-free abelian group. For a prime $p$, write $pG=\{px:x\in G\}$. The quotient $G/pG$ is naturally a vector space over $\F_p$, with scalar multiplication given by
\[
 a(x+pG)=ax+pG
\]
for all $a\in \F_p$ and $x\in G$. Then
\begin{equation}
 d(G)=\max_{p\mid |G|}\dim_{\F_p}(G/pG).
 \label{eq: generator rank}
\end{equation}
\end{fact}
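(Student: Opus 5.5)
We read the hypothesis as ``$G$ a finite abelian group'': the range $p\mid |G|$ only makes sense when $|G|<\infty$, and a finite torsion-free group is trivial. For a finitely generated torsion-free group $G\cong\Z^n$ the same argument gives $\dim_{\F_p}(G/pG)=n=d(G)$ for every prime $p$. The plan is to prove the two inequalities in \eqref{eq: generator rank} separately. The lower bound uses only the fact that quotients inherit generating sets. The upper bound uses the structure theorem for finite abelian groups together with the Chinese remainder theorem.

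\emph{Lower bound.} Suppose $G=\langle x_1,\dots,x_d\rangle$. The quotient map $G\to G/pG$ is a surjective homomorphism, so the images $x_i+pG$ span $G/pG$ over $\F_p$. Hence $\dim_{\F_p}(G/pG)\le d$ for every prime $p$. Taking $d=d(G)$ and maximizing over $p$ gives $\max_p\dim_{\F_p}(G/pG)\le d(G)$.

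\emph{Upper bound.} Write $G=\bigoplus_{p\mid |G|}G_p$ as the sum of its primary components, and decompose each one as
\[
G_p\cong\bigoplus_{j=1}^{k_p}\Z/p^{a_{p,j}}\Z,\qquad a_{p,j}\ge1 .
\]
For a prime $q\neq p$, multiplication by $p$ is an automorphism of $G_q$, so $G_q=pG_q$. Therefore
\[
G/pG\cong G_p/pG_p\cong \F_p^{k_p},
\]
and hence $k_p=\dim_{\F_p}(G/pG)$.

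Set $k=\max_p k_p$. Let $e_{p,j}$ be a generator of the $j$-th cyclic factor of $G_p$, with the convention $e_{p,j}=0$ for $j>k_p$. Define
\[
g_j=\sum_{p} e_{p,j}\qquad (1\le j\le k).
\]
We claim $\langle g_1,\dots,g_k\rangle=G$; this gives $d(G)\le k$. Let $N=|G|$, and write $N=p^{m}N'$ with $p\nmid N'$. By the Chinese remainder theorem there is an integer $c$ with $c\equiv 1\pmod{p^{m}}$ and $c\equiv 0\pmod{N'}$. Multiplication by $c$ is the identity on $G_p$ and kills every $G_q$ with $q\neq p$. Hence $cg_j=e_{p,j}$. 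So every $e_{p,j}$ lies in $\langle g_1,\dots,g_k\rangle$, and these elements generate $G$.

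There is no genuine obstacle here. The only points requiring care are the computation $G/pG\cong G_p/pG_p$, which rests on the fact that $p$ acts invertibly on the $q$-primary parts, and the CRT step that splits each $g_j$ into its primary components. Both are standard.
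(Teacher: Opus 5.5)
Your proof is correct. You are right to read the hypothesis as ``finite abelian'': the paper's own proof also treats $G$ as finite, since it decomposes $G$ into cyclic groups $\Z/n_i\Z$. Your lower bound (images of generators span $G/pG$) is the same as the paper's.

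For the upper bound you take a different route. The paper starts from the invariant factor decomposition $G\cong\bigoplus_{i=1}^t\Z/n_i\Z$ with $1<n_1\mid\cdots\mid n_t$, which gives $d(G)\le t$ at once. Its only remaining idea is to pick a prime $p_0\mid n_1$. Such a prime divides every $n_i$, so $G/p_0G\cong\F_{p_0}^t$ and the maximum reaches $t$.

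You instead start from the primary decomposition and compute $\dim_{\F_p}(G/pG)=k_p$ for every prime $p$. You then build $\max_p k_p$ generators by hand, using the CRT idempotents to recover each $e_{p,j}$ from $g_j$. That CRT step is in effect the passage from elementary divisors to invariant factors.

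The paper's argument is shorter because it outsources this step to the stronger form of the structure theorem. Yours needs only the primary decomposition. It also gives a little more: the exact value of $\dim_{\F_p}(G/pG)$ for every prime, not just the one achieving the maximum, and an explicit minimal generating set.
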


\begin{proof}
The trivial case is immediate. Otherwise, the structure theorem gives
\[
 G\cong\bigoplus_{i=1}^{t}\Z/n_i\Z,
 \qquad 1<n_1\mid n_2\mid\cdots\mid n_t.
\]
Thus $d(G)\leq t$. For every prime $p$, the images of any generating set of $G$ span $G/pG$ over $\F_p$, so $\dim_{\F_p}(G/pG)\leq d(G)$. Choosing a prime $p_0\mid n_1$, we then have $p_0\mid n_i$ for every $i$, and hence
\[
 G/p_0G\cong\F_{p_0}^t.
\]
Since $p_0\mid |G|$, this gives
\[
 t\leq\max_{p\mid |G|}\dim_{\F_p}(G/pG)\leq d(G)\leq t,
\]
which is \eqref{eq: generator rank}.
\end{proof}

We will use the following elementary counting fact repeatedly.

\begin{lemma}\label{lem: directions}
Let $G$ be a finite abelian group, let $H\leq G$, and $p$ be a prime. Suppose $v_1,\ldots,v_h\in H$ have linearly independent images in $H/pH$.  Let $A$ be a nonempty finite subset of $G$.
If for a set of indices $I\subseteq\{1,\ldots,h\}$ and $i\in I$,
$
 v_i\in\langle A-A\rangle,
$
then $|A|\geq1+|I|$.
\end{lemma}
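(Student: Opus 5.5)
Read the hypothesis as saying that $v_i\in\langle A-A\rangle$ for \emph{every} $i\in I$. The plan is to bound $|I|$ by a generator rank, passing through the subgroup $T:=\langle A-A\rangle\cap H$.

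First, bound the rank of $S:=\langle A-A\rangle$. Fix $a_0\in A$. Every difference $a-a'=(a-a_0)-(a'-a_0)$ lies in the group generated by the elements $a-a_0$ with $a\in A\setminus\{a_0\}$. Hence $S$ is generated by at most $|A|-1$ elements, so $d(S)\le |A|-1$.

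Second, pass to $T\le S$, which is a finite abelian group containing all $v_i$ with $i\in I$. I will use the standard fact that a subgroup of a finite abelian group has rank at most that of the ambient group, so $d(T)\le d(S)$. This is the one point needing an argument beyond Fact~\ref{fact: structure ab}, which is stated (evidently meaning finite groups) in terms of the quotients $G/pG$. For a finite abelian group, $\dim_{\F_p}(G/pG)=\dim_{\F_p}G[p]$, where $G[p]$ is the $p$-torsion subgroup; this follows from the structure theorem, since each cyclic factor of order divisible by $p$ contributes exactly one to both dimensions. Since $T[p]\subseteq S[p]$, Fact~\ref{fact: structure ab} gives
\[
d(T)=\max_p \dim T[p]\le \max_p \dim S[p]=d(S).
\]

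Third, transfer the independence hypothesis from $H/pH$ to $T/pT$. Because $pT\subseteq pH$, the inclusion $T\hookrightarrow H$ induces an $\F_p$-linear map $T/pT\to H/pH$. This map sends the class of $v_i$ in $T/pT$ to the class of $v_i$ in $H/pH$. Any linear relation among the classes of $v_i$ ($i\in I$) in $T/pT$ would map to the same relation in $H/pH$, where the classes are assumed independent. So the classes of the $v_i$ are linearly independent in $T/pT$.

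Combining the three steps,
\[
|I|\le \dim_{\F_p}(T/pT)\le d(T)\le d(S)\le |A|-1,
\]
which is the claim $|A|\ge 1+|I|$. The middle inequality holds because the images of any generating set of $T$ span $T/pT$.

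The step I expect to be the main obstacle is the subgroup-rank monotonicity. One cannot instead push the $v_i$ into $G/pG$, since independence in $H/pH$ need not survive there: for $H=2\Z/4\Z\le \Z/4\Z$ and $p=2$, the element $v=2$ is independent in $H/2H$ but vanishes in $G/2G$. This failure is why the argument works inside $T\le H$ and uses $p$-torsion to compare ranks.
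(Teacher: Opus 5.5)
Your proof is correct and follows essentially the same route as the paper's: you bound the rank of $\langle A-A\rangle$ by $|A|-1$ using the differences $b-a_0$, then find a subgroup of it that contains the $v_i$ ($i\in I$) and lies in $H$, so that independence in $H/pH$ forces its rank to be at least $|I|$. The only differences are that you use $T=\langle A-A\rangle\cap H$ where the paper uses $U=\langle v_i: i\in I\rangle$, and that you explicitly justify subgroup-rank monotonicity $d(T)\le d(\langle A-A\rangle)$ via $\dim_{\F_p}(G/pG)=\dim_{\F_p}G[p]$, a step the paper's one-line proof leaves implicit.
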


\begin{proof}
Fix $a\in A$, then the subgroup $W=\langle A-A\rangle$ is generated by the $|A|-1$ differences $b-a$, where $b\in A\setminus\{a\}$. The subgroup $U=\langle v_i:i\in I\rangle\leq W$ requires at least $|I|$ generators in its $p$ primary component, its image in $H/pH$ already has dimension $|I|$. 
\end{proof}

\subsection{Additive combinatorics}

We will use the resolution of the finite field Ruzsa's conjecture as a base case. 

\begin{fact}[Finite field expansion bound]\label{fact:finite-field}
For every prime $p$, every finite dimensional $\F_p$-vector space $V$, and every nonempty $A\subseteq V$ satisfying
$
 |A+A|\leq K|A|,
$
one has
\begin{equation}
 |\langle A\rangle|\leq \frac{p^{2K-2}}{2K-1}|A|.
 \label{eq:finite-field}
\end{equation}
\end{fact}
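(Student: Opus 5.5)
This bound is the known resolution of Ruzsa's conjecture over finite fields: Even-Zohar~\cite{EZ12} for $p=2$ and Even-Zohar--Lovett~\cite{EZL14} for general primes $p$. So I would not reprove it here. I would cite it, after checking that the normalisations agree. The statement in \cite{EZL14} is in terms of the affine span $\mathrm{Aff}(A)$, or equivalently $\langle A-A\rangle$ after translating $A$ to contain $0$. By the convention fixed in the notation section, a factor of $p$ between $\mathrm{Aff}(A)$ and $\langle A\rangle$ is absorbed. If one wants the displayed constant literally, one first translates so that $0\in A$; then $\langle A\rangle=\langle A-A\rangle$. Translation leaves $|A+A|$ and $|A|$ unchanged, so nothing is lost.

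If instead I wanted a self-contained, compression-based argument in the spirit of Green--Tao~\cite{GT09}, the plan would run as follows.
\begin{enumerate}
\item Induct on $|A|$, and reduce to the case $0\in A$ and $\langle A\rangle=V=\F_p^n$.
\item Fix a basis and an ordering of $\F_p$. Repeatedly apply coordinate compressions: in each line parallel to a coordinate axis, replace the fibre of $A$ by an initial segment of the same size. This preserves $|A|$, does not increase $|A+A|$, and keeps $A$ spanning (the last point needs care). Iterate until $A$ is a down-set.
\item For a down-set, project along the last coordinate. Write the fibres as $A_0\supseteq A_1\supseteq\cdots$ over the values $0,1,\dots$ of that coordinate. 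Then $A+A$ contains the disjoint unions of the sets $A_i+A_j$ indexed by $i+j$, which gives a recursive inequality.
\end{enumerate}

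The main obstacle is the extremal optimisation in step (iii). One has to show that $|A+A|/|A|$, for down-sets of dimension $n$, is minimised by near-simplex configurations, with exactly the ratio $p^{2K-2}/(2K-1)$. The natural attempt is induction on the dimension via the fibre inequality $|A+A|\ge\sum_k\max_{i+j=k}|A_i+A_j|$. Closing that induction requires a convexity argument in the variable $K$, and this is where the prime structure of $\F_p$ (lexicographic order) is used. Since that is exactly the content of \cite{EZL14}, the proof given here will simply invoke it.
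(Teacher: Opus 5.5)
Your main route is the paper's: the paper also proves nothing here and simply attributes the Fact to \cite{EZ12} for $p=2$ and to \cite{EZL14} for odd $p$. The genuine gap is the normalisation check you say you would carry out. It fails for $p=2$. In fact the displayed inequality is false there, so no citation can supply it.

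The function $4^{K-1}-(2K-1)$ is convex and vanishes at $K=1$ and $K=3/2$. Hence the right-hand side $\frac{2^{2K-2}}{2K-1}|A|$ is strictly smaller than $|A|$ for every $1<K<3/2$. For instance, $A=\{0,e_1,e_2\}\subseteq\F_2^2$ has $A+A=\F_2^2$, hence $K=4/3$, and the claim would read $4\le\frac95\,2^{2/3}\approx 2.86$.

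More generally, the simplex $\{0,e_1,\dots,e_d\}\subseteq\F_2^d$ has $|A+A|=1+d+\binom d2$, so $2K=d+\frac{2}{d+1}$. Its span $2^d$ exceeds the displayed bound by a factor tending to $4$. The cause is that $2e_i=0$ in characteristic $2$, which shrinks $A+A$. For odd $p$ the same simplex has $K=\frac{d+2}{2}$ and attains equality; this is why $\frac{p^{2K-2}}{2K-1}$ is the sharp odd-$p$ bound of \cite{EZL14}. The binary theorem of \cite{EZ12} has the larger shape $(1+o(1))4^K/(2K)$. The extremal ratio asserted in your compression sketch has the same defect at $p=2$.

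So the statement has to be restricted to odd $p$, with a constant-factor weakening for $p=2$ such as $|\langle A-A\rangle|\le C\,4^{K}|A|$. This only adds $O(1)$ to the dimension bound in Lemma~\ref{lem: bound dim}, so the paper's theorems are unaffected.

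Separately, your claim that translating to $0\in A$ gives the displayed constant ``literally'' with nothing lost is not right. Translation changes $\langle A\rangle$, and for the linear span the bound already fails in simple cases:
\begin{itemize}
\item $A=\{x\}$ with $x\neq 0$: here $K=1$, the right-hand side is $1$, and $|\langle A\rangle|=p$;
\item $\{e_1,\dots,e_d\}\subseteq\F_p^d$ with $p$ odd: the right-hand side is $p^{d-1}$, while the span has size $p^d$.
\end{itemize}
The inequality can only hold with $|\langle A-A\rangle|$, the size of the affine span. That is the form in \cite{EZL14} and the reading the paper's convention intends. Translation only shows that this reading reduces to the case $0\in A$.
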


As discussed in the introduction, for $p=2$, this follows from Even-Zohar's sharp binary theorem \cite{EZ12}. For odd $p$, it follows from Even-Zohar--Lovett \cite{EZL14}.

We use the following standard quantitative consequence of the general Green--Ruzsa theorem for a coarse structure control. Sanders' quantitative structure theorem \cite[Theorem~1.4]{Sanders13} gives a polylogarithmic function of $K$, which is in particular $o(K)$. The more recent improvement of Raghavan may be used in its place.

\begin{fact}[Sanders' Green--Ruzsa]\label{fact: SGR}
There is an absolute constant $C>0$ with the following property. Write $F(K)=C(\log K)^4$, and $A$ is a nonempty finite subset of an abelian group and $|A+A|\leq K|A|$, then there exist a finite set $T$, a subgroup $H$, an integer $d\leq F(K)$, a homomorphism $\phi:\Z^d\to G$ and a centrally symmetric convex body $Q\subseteq\mathbb R^d$ containing $0$
such that, with 
\[
 P=\phi(Q\cap\Z^d),
\]
and $C=H+P$ the coset progression, one has
\begin{equation}
 A\subseteq T+C,\qquad |T|\leq\exp(F(K)),\qquad |C|\leq\exp(F(K))|A|.
 \label{eq: coarse GR}
\end{equation}
\end{fact}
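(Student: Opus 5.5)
The plan is to deduce the statement directly from Sanders' quantitative Freiman theorem \cite[Theorem~1.4]{Sanders13}, keeping the constants explicit so that everything is absorbed into a single $F(K)=C(\log K)^4$. The remaining work is to handle the range of $K$ near $1$. I expect that range to be the main obstacle, and in fact I expect the statement to fail there as worded.

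\textbf{Step 1 (large $K$).} Sanders' theorem states: if $|A+A|\le K|A|$ in an abelian group $G$, then there is a $d$-dimensional coset progression $H+P$ with $P=\phi(Q\cap\Z^d)$, $Q$ a centrally symmetric convex body, such that
\[
d=O(\log^{4}K),\qquad A\subseteq T+H+P,\qquad |T|\le\exp(O(\log^{4}K)),\qquad |H+P|\le\exp(O(\log^{4}K))|A|.
\]
He states it with $\log^{4}K\cdot(\log\log K)^{O(1)}$; the cleaner $(\log K)^4$ form is what Raghavan's improvement gives, and I will quote whichever version has the clean exponent. Taking $C$ to be the maximum of the implied constants gives all three bounds in \eqref{eq: coarse GR}, together with $d\le F(K)$, for all $K\ge 2$. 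On this range $\log K\ge\log 2$, so any additive loss $O(1)$ in Sanders' bounds is absorbed by enlarging $C$.

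\textbf{Step 2 (small $K$).} For $K<2$ we have $F(K)\to 0$ as $K\to1$.

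\emph{At $K=1$.} The conditions $|T|\le\exp(F(K))=1$ and $d\le 0$ force $T$ to be a single point and $C=H$ to be a subgroup with $|H|\le|A|$. This is exactly Kneser's characterization: $|A+A|=|A|$ if and only if $A=a+H$. So the endpoint $K=1$ holds.

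\emph{For $1<K<3/2$.} Freiman's $3/2$ theorem shows that $H=A-A$ is a subgroup, $A$ lies in a coset of $H$, and $|H|\le K|A|$. To match \eqref{eq: coarse GR} we would need $K\le\exp(C(\log K)^4)$. Writing $K=1+\delta$, this requires $\delta\lesssim C\delta^4$, which fails for small $\delta$.

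\textbf{The obstruction.} This failure is genuine rather than an artifact of the method. Take $A=H\setminus\{x\}$ in a large finite group $H$. Then $A+A=H$, so $K=|H|/|A|$. Every admissible $T+C$ has $|T|=1$ and $d=0$, so $C$ must be a subgroup coset containing $A$, hence of size at least $|H|=K|A|$, which is larger than $\exp(F(K))|A|$.

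\textbf{Conclusion.} The proof I would write establishes the statement for $K\ge2$ via Step 1 and for $K=1$ via Kneser. For $1<K<2$ I would state explicitly that the literal bound requires replacing $\log K$ by $\log(2K)$, or equivalently restricting to $K\ge2$. This costs nothing later, since Fact~\ref{fact: SGR} is only used with $F(K)=o(K)$ as $K\to\infty$ for the $o_K(1)$ terms in Theorem~\ref{thm:main} and Theorem~\ref{thm: main 2-nil}.
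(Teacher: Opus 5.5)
Your approach is the paper's: it gives no argument beyond quoting Sanders' quantitative Freiman theorem (or Raghavan's improvement), and only $F(K)=o(K)$ as $K\to\infty$ is used downstream, so which source supplies the clean exponent $4$ is immaterial. Your small-$K$ remark is correct and points to a genuine defect in the statement as printed: for $A=H\setminus\{x\}$ with $|H|$ large, $F(K)<\min\{1,\log 2\}$ forces $|T|=1$ and $d=0$, so the coset progression is a subgroup containing $A-A=H$, of size $K|A|>e^{F(K)}|A|$; replacing $\log K$ by $\log(2K)$ (or assuming $K\ge 2$) fixes this at no cost to Theorems~\ref{thm:main} and~\ref{thm: main 2-nil}.
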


We finish the abelian input by recording the following simple lemma, shows how large can the subgroup generated by a convex progression be when the exponent is bounded. 

\begin{lemma}\label{lem: progression span}
Let $G$ have exponent dividing $r$, let
$C=H+\phi(Q\cap\Z^d)$ be as in Fact~\ref{fact: SGR}, and put
$L=\langle C\rangle$.  Then
\begin{equation}
 |L|\leq |C|r^d.
 \label{eq: progression span}
\end{equation}
\end{lemma}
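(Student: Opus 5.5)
We prove $|L|\le |C|\,r^d$ by writing $L$ as the union of few translates of $C$. Put $P=\phi(Q\cap\Z^d)$, so $C=H+P$. Since $H$ is a subgroup and $P\subseteq\phi(\Z^d)$, we have
\[
 L=\langle C\rangle = H+\langle P\rangle\subseteq H+\phi(\Z^d).
\]
The image $\phi(\Z^d)$ is generated by $\phi(e_1),\dots,\phi(e_d)$. Because $G$ has exponent dividing $r$, each $\phi(e_i)$ has order dividing $r$. Hence
\[
 \phi(\Z^d)=\phi(\{0,1,\dots,r-1\}^d),
\]
so $|\phi(\Z^d)|\le r^d$.

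It follows that
\[
 L\subseteq H+\phi(\Z^d)=\bigcup_{x\in\phi(\{0,\dots,r-1\}^d)}(x+H).
\]
This already shows that $L$ is a union of at most $r^d$ cosets of $H$. Since $0\in Q$, we have $0\in P$, so $H\subseteq C$ and $|H|\le |C|$. Therefore
\[
 |L|\le r^d|H|\le r^d|C|,
\]
which is \eqref{eq: progression span}.

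If the reverse inclusion $\langle P\rangle=\phi(\Z^d)$ fails, this causes no problem: we only used the containment $L\subseteq H+\phi(\Z^d)$. Likewise the convexity and central symmetry of $Q$ are never used; the only input from the shape of $C$ is $0\in Q$. So no step here is a genuine obstacle, and the bound is crude. Its role is only to feed $d\le F(K)=o(K)$ from Fact~\ref{fact: SGR} into the later estimates, giving $r^{o(K)}$.
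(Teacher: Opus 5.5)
Your proof is correct and follows the paper's argument essentially step for step. The ingredients match: $0\in Q$ gives $H\subseteq C$, then $L\le H+\phi(\Z^d)$, and $|\phi(\Z^d)|\le r^d$ because $\phi(\Z^d)$ is generated by $d$ elements of order dividing $r$. Together these yield $|L|\le |H|r^d\le |C|r^d$.
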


\begin{proof}
Since $0\in Q$, we have $H\subseteq C$ and hence $|H|\leq|C|$.
Moreover,
\[
 L=H+\langle\phi(Q\cap\Z^d)\rangle
   \leq H+\phi(\Z^d).
\]
The image $\phi(\Z^d)$ is generated by at most $d$ elements, each of
order dividing $r$, and therefore has size at most $r^d$.  Thus
$|L|\leq|H|r^d\leq|C|r^d$.
\end{proof}

\section{A level set refinement of Peng--Kong--Tran}

Let us prove Proposition~\ref{prop: 1}. 
Let $G$ be a (not necessarily abelian) group and we use multiplication as group operation. 
Let $\pi:G\to Q$ a group homomorphism, and $A,B\subseteq G$ are finite, and $|AB|\leq K|B|$. Fix $x\in\pi(A)\subseteq Q$, and define fiber 
\[
A_x = A\cap \pi^{-1}(x). 
\]
Write $a_x = |A_x|$ be the fiber length. Consider the level set
\[
L_j(A) = \{ x\in \pi(A) : a_x \geq j\}.
\]
We similarly define them for $B$. For each $z\in Q$, let
\[
C_z = AB \cap \pi^{-1}(z).
\]
Note that if $z\in \pi(A)L_j(B)$ then $|C_z|\geq j$. Indeed, choose $xy=z$ with $x\in\pi(A)$ and $y\in L_j(B)$, and observe that a translate of $B_y$ lies in $C_z$. Therefore,
\[
\sum_{j\geq1}|\pi(A)L_j(B)|\leq |AB|. 
\]
As $|B| = \sum_{j\geq1}|L_j(B)|$, there is a level set $L$ of $B$ such that
\begin{equation}\label{eq: level set PKT}
\frac{|\pi(A)L|}{|L|}\leq K. 
\end{equation}

Using this, we derive the following result for abelian groups. Recall that $\sigma(L)=|L+L|/|L|$. 
\begin{lemma}\label{lem: bound dim}
    Let $G$ be an abelian group with exponent $r$, and $\phi:G\to V$ a group homomorphism maps $G$ to a $\F_p$-vector space. Suppose $V=\langle \phi(A) - \phi(A)\rangle$ and $|A+A|\leq K|A|$. Then 
    \[
    \dim(V)\leq \frac{\log |\phi(A)|}{\log p} + 2K + O(\log(K)). 
    \]
\end{lemma}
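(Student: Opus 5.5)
Apply Proposition~\ref{prop: 1} (in the form \eqref{eq: level set PKT}) with $B=A$ and $\pi=\phi$, written additively. This gives a level set $L=L_j(A)\subseteq\phi(A)$ with $|\phi(A)+L|\leq K|L|$. In particular $|L+L|\leq|\phi(A)+L|\leq K|L|$, so $L$ itself has doubling at most $K$ inside the $\F_p$-vector space $V$.

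The plan is to bound $\dim V$ in two stages. First bound the span of $L$ using Fact~\ref{fact:finite-field}. Then control the extra dimension contributed by $\phi(A)$ beyond $\langle L-L\rangle$ using the expansion $|\phi(A)+L|\leq K|L|$.

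\textbf{First stage.} Fact~\ref{fact:finite-field} applied to $L$ gives
\[
|\langle L-L\rangle|\leq p^{2K-2}|L|\leq p^{2K}|\phi(A)|.
\]
Hence $W:=\langle L-L\rangle$ satisfies
\[
\dim W\leq \frac{\log|\phi(A)|}{\log p}+2K.
\]

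\textbf{Second stage.} Bound $\dim(V/W)$. Let $\psi:V\to V/W$ be the quotient map and let $X=\psi(\phi(A))$. Since $L$ lies in a single coset of $W$, the set $\phi(A)+L$ is a disjoint union, over $x\in X$, of the sets $(\phi(A)\cap\psi^{-1}(x))+L$, each of size at least $|L|$. Therefore
\[
|X|\,|L|\leq|\phi(A)+L|\leq K|L|,\qquad\text{so } |X|\leq K.
\]
Since $V=\langle\phi(A)-\phi(A)\rangle$, the quotient $V/W$ is spanned by $X-X$. By Lemma~\ref{lem: directions} (or directly, since $|X|-1$ differences span it), $\dim(V/W)\leq |X|-1<K$.

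\textbf{The main obstacle.} Combining the two stages naively gives $\dim V\leq \log|\phi(A)|/\log p+3K$, which has the wrong constant. To obtain $2K+O(\log K)$, the quotient contribution must be $O(\log K)$ rather than $K$.

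To fix this, I would replace the crude count $|X|\leq K$ by a Freiman-type argument on $X$. I would apply Fact~\ref{fact: SGR} to $A$ and push forward through $\psi\circ\phi$: the image of $T+C$ covers $X$. The image of $C$ in an $\F_p$-space is a subgroup plus at most $d\leq F(K)$ generators, and $T$ contributes at most $\log|T|/\log p\leq F(K)$ further directions. Hence $X$ lies in a union of at most $\exp(F(K))$ cosets of a subspace $U$ with $\dim(U/(U\cap\psi(\text{subgroup part})))\leq F(K)$.

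The subgroup part is where I would reuse the first stage. I would do the level-set step relative to the subgroup $H$, or alternatively absorb the subgroup into $L$ by choosing the level set after quotienting, so that its image is already contained in $W$ up to $O(\log K)$ dimensions. This bookkeeping is the step I am least sure of. The fallback is to iterate: apply the level-set argument again on the fibers over $X$ and use $|X|\leq K$ to bound the number of rounds by $\log K$.
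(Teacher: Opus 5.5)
Your two stages are each valid, but together they only give $\dim V\le\log_p|\phi(A)|+3K$. The proposed repair is not an argument, as you acknowledge. It also aims at a false target: $\dim(V/W)$ need not be $O(\log K)$. If the level set is a single point $L=\{\ell_0\}$, then $W=0$ and $\dim(V/W)$ is the full affine dimension of $\phi(A)$. This really occurs. In $G=\F_p^m\times\F_p^k$ with $\phi$ the projection onto $V=\F_p^m$, take $A=(\{0\}\times(S\cup\{f\}))\cup(\{e_1,\dots,e_m\}\times\{0\})$, where $S$ is a subspace with $|S|\ge(m+1)^2$ and $f\notin S$. A direct count gives $m+1\le K\le m+2$. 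So $L=\{0\}$ is an admissible level set in \eqref{eq: level set PKT}, while $\dim(V/W)=m\ge K-2$. For such an $L$, no Freiman-type information about $X$ can make the quotient contribution small. What compensates is that $\dim W=0$, far below the $2K$ you budgeted in the first stage.

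The missing idea is to keep $\sigma(L)=|L+L|/|L|$ as a free parameter in both stages instead of replacing it by $K$. In the second stage, the part of $\phi(A)$ lying in the coset $\ell_0+W\supseteq L$ contributes at least $|L+L|=\sigma(L)|L|$ elements to $\phi(A)+L$, not merely $|L|$. The parts of $\phi(A)$ in each of the other $q-1$ cosets ($q=|X|$) contribute at least $|L|$ further elements each, lying in distinct cosets of $W$. Hence
\[
(\sigma(L)+q-1)|L|\le|\phi(A)+L|\le K|L|,\qquad\text{so}\qquad \dim(V/W)\le q-1\le K-\sigma(L).
\]
In the first stage, apply Fact~\ref{fact:finite-field} with doubling constant $\sigma(L)$ rather than $K$, which gives $\dim W\le\log_p|L|+2\sigma(L)-2$. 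Adding the two bounds and using $|L|\le|\phi(A)|$ and $\sigma(L)\le K$ gives
\[
\dim V\le\log_p|\phi(A)|+K+\sigma(L)-2\le\log_p|\phi(A)|+2K-2.
\]
This is the paper's proof. The trade-off between the doubling of $L$ and the number of $W$-cosets met by $\phi(A)$ does all the work. It needs no appeal to Fact~\ref{fact: SGR} and no iteration, and it incurs no $O(\log K)$ loss.
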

\begin{proof}
    By \eqref{eq: level set PKT}, there is a level set $L$ of $A$ such that $|\phi(A)+L|\leq K|L|$. Let $W=\langle L-L\rangle \leq V$. Now we consider cosets of $W$, which form a partition of $V$. Write $q$ the number of cosets of $W$ that meets $\phi(A)$. 

    As $L\subseteq \pi(A)$, clearly $|\phi(A)+L|\geq |L+L|$. As $L+L\subseteq W$, every other nontrivial cosets of $W$ that meets $\phi(A)$ contribute at least a translate of $L$ in $\phi(A)+L$. Therefore
    \[
    |\phi(A)+L| \geq |L+L| + (q-1)|L|. 
    \]
    From this we get 
    \begin{equation}\label{eq: q and L}
    q \leq 1+ K - \sigma(L)
    \end{equation}
    On the other hand, as $V$ is generated by $\pi(A)-\pi(A)$, $\dim(V) \leq \dim(W) +(q-1)$. By the sharp result for finite field (Fact~\ref{fact:finite-field}), we have
    \[
   |W| = p^{\dim(W)} \leq \frac{p^{2\sigma(L)-2}}{2\sigma(L)-1} |L|. 
    \]
   Since $|L|\leq |\phi(A)|$ and $\sigma(L)+q-1\leq K$ by \eqref{eq: q and L}, we get the desired conclusion. 
\end{proof}

\section{Proof of Ruzsa's conjecture}

\begin{proof}
As $|A+A|\leq K|A|$ and $A\subseteq G$ an abelian group, by Sander's quanitative Green--Ruzsa theorem (Fact~\ref{fact: SGR}), there is a constant $C>0$ such that if we let
\[
F(K) = C(\log K)^{4}=o(K), 
\]
We have $A\subseteq T + P$ where $P$ is a convex progression and $\dim(P)\leq F(K)$, $|T|\leq\exp(F(K))$, and 
\[
|P|\leq \exp(F(K))|A|. 
\]
By losing a factor at most $r$, we may assume that $0\in A$. Write $G=\langle A\rangle$ and $H =\langle P \rangle$, and $Q=G/H$ with $\pi: G\to Q$ the natural projection. $|\pi (A)|\leq T$ by Fact~\ref{fact: SGR}. Hence by Lemma~\ref{lem: progression span}, we have
\begin{equation}\label{eq: bound for H}
|H|\leq |P| r^{\dim(P)}. 
\end{equation}
Now for prime $p$ divides $r$, consider $\psi: Q \to Q/pQ$ and $\phi = \psi\circ \pi$, that is
\[
\phi: G \xrightarrow{\pi}  Q \xrightarrow{\psi}  Q/pQ.
\]
By Lemma~\ref{lem: bound dim}, 
\[
\dim_{\F_p}(Q/pQ) \leq \frac{F(K)}{\log p} + 2K +O(\log K) = (2+o(1))K. 
\]
Then by the structure theorem (Fact~\ref{fact: structure ab}), 
\[
d(Q) = \max_{p\mid r} \dim_{\mathbb F_p}(Q/pQ)\leq (2+o(1))K,
\]
and this implies $|Q|\leq r^{d(Q)}= r^{(2+o(1))K}$. Finally, by \eqref{eq: bound for H}, 
\[
|G| \leq |H||Q| \leq r^{(2+o(1))K} |A| 
\]
as desired. 
\end{proof}

Let us remark that in the proof we use Sanders' quantitative Green--Ruzsa theorem (Fact~\ref{fact: SGR}) as a coarse structure input. We believe this input can be removed by iteratively applying Proposition~\ref{prop: 1}, and it is then possible to obtain a sharp control on the $o_K(1)$ term. See the appendix, where we proved a special case without Sanders' coarse structure theorem being used. Removing Sanders' input from the proof of Theorem~\ref{thm:main} would be an interesting direction, but we do not pursue it in this paper.


\section{A refinement of the projection argument in 2-step nilpotent groups}

Throughout this section, let $N=\langle A\rangle$ be a finite
group of nilpotency class at most two and with exponent $r\geq2$, where $A$ is nonempty and $|A^3|\leq K|A|.$ In fact, assuming elements in $A^2$ having exponent $r$ suffices. 

Let $H=N/[N,N]$ be the abelianization and $\pi: N\to H$ the natural projection. We use additive notation in $H$ and multiplicative notation in $N$. 

As usual, for each $x\in\pi(A)$ we choose $a_x\in A\cap\pi^{-1}(x)$ and write 
\[
A_x = A\cap \pi^{-1}(x) = a_xU_x
\]
for some $U_x\subseteq[N,N]$ containing the identity. Write $f(x)=|U_x|$ the fiber length. We use 
\[
L_t = \{x\in\pi(A): f(x)\geq t\}
\]
for the level set. 

Now we consider the largest fiber, that is, we pick $x_0\in H$ maximizing $f$, and let $A_0=A_{x_0}$ and $U_0=U_{x_0}$. Let  $W = \langle \bigcup_x U_x\rangle$ and $W_0 = \langle U_{0}\rangle$ which are subgroups of $[N,N]$. Hence $W$ captures the fiber information of $A$ in $[N,N]$. The following proposition refines Lemma~\ref{lem: bound dim}. As $G$ is nonabelian we estimate the product set from projection and fibers simultaneously. 

\begin{proposition}\label{nc:prop:joint}
With notation fixed above, for a prime $p$, write
\[
 \ell_p=\dim_{\F_p}\bigl((W/W_0)/p(W/W_0)\bigr),
 \qquad
 m_p=\dim_{\F_p}\bigl(([N,N]/W)/p([N,N]/W)\bigr).
\]
Then
\begin{equation}\label{eq: joint full nil}
 |A^3|\geq \sum_{t=1}^{|U_{0}|}|\pi(A)+\pi(A)+L_t|
 +m_p|A|+\ell_p|U_{0}||\pi(A)|+(|U_0^2|-|U_0|)|\pi(A)|.
\end{equation}
\end{proposition}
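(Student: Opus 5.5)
We decompose $A^3$ along the fibers of $\pi:N\to H$ and count fiber by fiber. For $z\in\pi(A)+\pi(A)+\pi(A)$ write $C_z=A^3\cap\pi^{-1}(z)$, so $|A^3|=\sum_z|C_z|$. The four terms on the right of \eqref{eq: joint full nil} come from four disjoint sources of elements inside these fibers. We identify each fiber $\pi^{-1}(z)$ with a coset $g_z[N,N]$ and measure $C_z$ inside $[N,N]$. Since $[N,N]$ is central, the set $A_xA_yA_w=a_xa_ya_wU_xU_yU_w$ is a central translate of $U_xU_yU_w$.

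\textbf{First term.} We run the level set counting of Section 3 twice. Any $z=x+y+w$ with $w\in L_t$ has $|C_z|\ge t$, because $C_z$ contains a translate of $U_w$. Summing over $t$ then bounds $\sum_z|C_z|$ from below by $\sum_{t\ge1}|\pi(A)+\pi(A)+L_t|$. We truncate this sum at $t\le|U_0|$, which is automatic since $L_t=\emptyset$ for $t>|U_0|$. To leave room for the remaining terms, we replace this crude count by a layered count of the same form. In the fibers over $z\in x_0+x_0+\pi(A)$, or more conveniently over $z\in x_0+\pi(A)+\pi(A)$, the sets $a_{x_0}U_0\,a_xU_x\,a_y$ etc.\ are larger than the single-fiber translates used above, and the excess is what we harvest.

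\textbf{Extra terms.} For $(|U_0^2|-|U_0|)|\pi(A)|$: for each $y\in\pi(A)$, the fiber over $2x_0+y$ contains a translate of $U_0^2\cdot a_y$. This beats the level-set contribution by $|U_0^2|-|U_0|$, since the level-set count charged that fiber at most $|U_0|$. Summing over $y$ gives the term.

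For $\ell_p|U_0||\pi(A)|$: choose elements $u_1,\dots,u_{\ell_p}\in\bigcup_xU_x$ whose images in $(W/W_0)/p(W/W_0)$ are independent, with $u_i\in U_{x_i}$. In the fiber over $x_0+x_i+y$ we get $U_0U_{x_i}$, which contains both $U_0$ and $u_iU_0$. These are disjoint cosets-worth modulo $W_0$, giving $|U_0|$ extra elements per $i$ and per $y$. Lemma~\ref{lem: directions} ensures the distinct directions give genuinely new elements rather than collisions.

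For $m_p|A|$: $[N,N]$ is generated by commutators of elements of $A$, since $N=\langle A\rangle$ has class two. So there are $m_p$ commutators $[a,b]$, $a,b\in A$, independent modulo $W$ and $p$. We use the identity $ab=ba[a,b]$: the products $a\,b\,c$ and $b\,a\,c$ lie in the same $\pi$-fiber but differ by $[a,b]\notin W$. So they land outside the $W$-coset already counted. Varying $c\in A$ gives $|A|$ new elements per independent commutator, and hence the term $m_p|A|$.

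\textbf{Main obstacle.} The hard step is the bookkeeping that makes these four contributions disjoint, so that they add. The level-set term counts, in each fiber, elements lying in a fixed $W$-coset (indeed a $W_0$-translate structure). The $\ell_p$ term lives in other $W_0$-cosets inside the same $W$-coset. The $m_p$ term lives in other $W$-cosets. The $U_0^2$ term is an excess inside the $W_0$-coset of the maximal fiber.

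I would organize this by partitioning each $C_z$ according to cosets of $W_0\le W\le[N,N]$. Then, fiber by fiber, I would check that each contribution is charged to a distinct coset, or, for the first term, that it is bounded by the size of a single coset intersection. The delicate point is ensuring that the independence modulo $p$ yields distinct cosets for every $y$ simultaneously, and that $p$-linear independence really forces $\ell_p$ (resp.\ $m_p$) distinct cosets rather than fewer.
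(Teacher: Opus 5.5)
Your four sources are the right ones, and so are your tools: centrality of $[N,N]$, the commutator identity, and Lemma~\ref{lem: directions} in $[N,N]/W$ and in $W/W_0$. But the step you call the main obstacle is the real content of the proof, and you leave it unresolved. The charging scheme you sketch for it does not work as stated.

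\textbf{The $m_p$ term.} In a fixed fiber $\pi^{-1}(s)$, several indices $i$ contribute pairs $a_{x_i}a_{y_i}A_{z_i}$, $a_{y_i}a_{x_i}A_{z_i}$, where $z_i=s-x_i-y_i$. These pairs have different sizes $f(z_i)$. Lemma~\ref{lem: directions} only counts distinct $W$-cosets. It does not let you extract the first-term amount for that fiber plus $\sum_i f(z_i)$ further elements. Saying the new sets ``land outside the $W$-coset already counted'' does not stop the new sets for different $i$ from falling into the same coset.

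The paper fixes this by layering. For $t\le |U_0|$, let $n_s(t)$ be the number of $W$-cosets in $\pi^{-1}(s)$ containing at least $t$ points of $A^3$. Applying Lemma~\ref{lem: directions} at each level gives $n_s(t)\ge \e_{\pi(A)+\pi(A)+L_t}(s)+|I_s(t)|$, where $I_s(t)=\{i: s-x_i-y_i\in L_t\}$. Summing over $s$ and $t$ produces the first term and $m_p\sum_t|L_t|=m_p|A|$ at the same time, so the two never have to be separated.

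\textbf{The bookkeeping claim.} You assert that the $\ell_p$ contributions sit in the same $W$-coset as the first-term contribution, while the $m_p$ contributions sit in other $W$-cosets. This is false in general. In a fiber $s=x_0+x_j+y$, the pairs $a_ya_{x_j}A_0,\ a_ya_{x_j}u_jA_0$ for different $j$ can lie in different $W$-cosets. Only one of these can be your first-term coset, and the others may be exactly the cosets used by the $m_p$ count. Charging to distinct $W_0$-cosets is not available either, because the sets $a_{x_i}a_{y_i}A_{z_i}$ spread over several $W_0$-cosets, since $U_{z_i}\subseteq W$.

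The missing idea is to split, for every $W$-coset $S$,
$|A^3\cap S|=\min\{|A^3\cap S|,|U_0|\}+(|A^3\cap S|-|U_0|)_+$.
\begin{itemize}
\item The first and $m_p$ terms only involve translates of fibers of size at most $|U_0|$. So they are bounded by the truncated part, which is what $\sum_{t\le|U_0|}\sum_s n_s(t)$ computes.
\item The $\ell_p$ and $U_0^2$ terms are bounded by the excess. A given index $j$ occurs at most once in a $W$-coset $C$, because $y$ is determined by the fiber. So Lemma~\ref{lem: directions} in $W/W_0$ gives at least $e_C+1$ distinct $W_0$-cosets in $C$, each with at least $|U_0|$ points of $A^3$. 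Hence $(|A^3\cap C|-|U_0|)_+\ge e_C|U_0|$.
\item On the coset $C_y$ containing $a_{x_0}^2a_yU_0^2$, this improves to $e_{C_y}|U_0|+|U_0^2|-|U_0|$, whether or not that $W_0$-coset was already among the previous ones.
\end{itemize}
The two parts add up exactly to $|A^3\cap S|$, so no disjointness check between the two families is needed.
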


\begin{proof}
Let us first pick the basis in the two $\F_p$-vector spaces  $(W/W_0)/p(W/W_0)$ as well as  $([N,N]/W)/p([N,N]/W)$.  For $W/W_0$, the images of $U_x$ generate it, then we may choose $u_i\in U_{x_i}$ for $1\leq i\leq \ell_p$ such that the image of $\{u_i\}$ generates $(W/W_0)/p(W/W_0)$. Note that here $U_{x_i}$ need not be distinct. 

As $N$ is $2$-step nilpotent, $[N,N]$ is central. Then for each $u,v\in [N,N]$ one has
\[
[a_xu,a_yv] = [a_x, a_y].
\]
Now we choose $(x_i,y_i)\in\pi(A)\times\pi(A)$ with $1\leq i\leq m_p$ such that
\[
c_i = [a_{x_i},a_{y_i}]W
\]
form a basis in the vector space $([N,N]/W)/p([N,N]/W)$.

Let us now consider the coset of $W$ in  $N$. For each $W$ coset $S$ in $N$ we have the following trivial identity
\[
|A^3\cap S| = \min\{|A^3\cap S|, |U_0|\} + (|A^3\cap S|-|U_0|)_+ =: \mathrm{I} + \mathrm{II},
\]
and we will estimate two terms separately. 

\begin{figure}[h]
    \centering
    \includegraphics[width=0.7\linewidth]{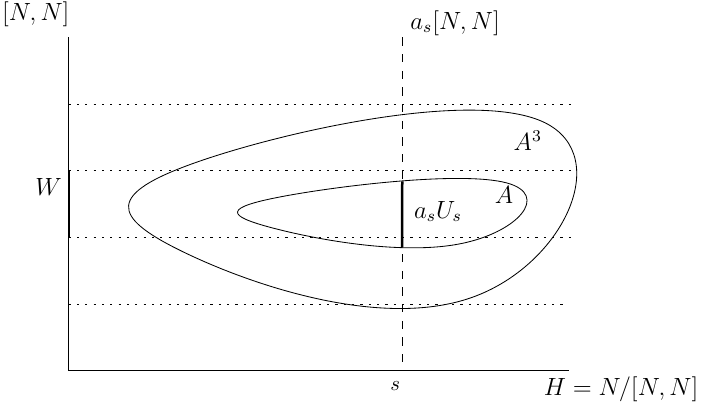}
    \caption{Grid type decomposition of $N$ using cosets of $W$}
    \label{fig:nilp}
\end{figure}

\noindent{\emph{Term I estimates.}} For $s\in H$ and $1\leq t\leq |U_0|$, consider the following notion of level set which counts the number of $W$ cosets with at least $t$ elements in $A^3$ fiberwise: 
\[
n_s(t) = \# \{S\subseteq \pi^{-1}(s): S \text{ is a }W\text{-coset, }|A^3\cap S|\geq t\}.
\]
Recall the choice of $c_i = [a_{x_i},a_{y_i}]W$ we defined above. 
Let $I_s(t)\subseteq [m_p]$ so that for each $i\in I_s(t)$, if we write $z=s-x_i-y_i$, then $z\in L_t$, that is, $\pi^{-1}(z)$ is a $[N,N]$-fiber with at least $t$ element of $A$. Observe that $a_{x_i}a_{y_i}A_z$ and $a_{y_i}a_{x_i}A_z$ both lie in $A^3\cap\pi^{-1}(s)$, each has at least $t$ elements, and each is contained in a $W$ coset, and these cosets differ by $c_i$. 

The cosets counted in $n_s(t)$ can be viewd as subsets of $[N,N]/W$. If $I_s(t)$ is not empty, the above argument gives a $c_i$ from the difference. By Lemma~\ref{lem: directions}, we have 
\[
n_s(t)\geq 1+|I_s(t)|. 
\]
Hence if $s\in \pi(A)+\pi(A)+L_t$, then a set $a_xa_yA_z$ with $x,y\in \pi(A)$ and $z\in L_t$, and with $s = x+y+z$, gives at least one coset count by $n_s(t)$. As $I_s(t)$ is empty whenever $s\notin \pi(A)+\pi(A)+L_t$ we have
\[
n_s(t)\geq \e_{\pi(A)+\pi(A)+L_t}(s) + |I_s(t)|. 
\]
As for each fixed $i\in[m_p]$ there are exactly $L_t$ many $s$ contribute to $|I_s(t)|$, therefore
\[
\sum_{s\in H} n_s(t) \geq |\pi(A)+\pi(A)+L_t| + m_p|L_t|. 
\]
Use the fact that  $\sum_{t=1}^{|U_0|} |L_t|=|A|$, we have
\[
\sum_S \min\{|A^3\cap S|,|U_0|\} = \sum_{t=1}^{|U_0|} \sum_{s\in H} n_s(t) \geq \sum_{t=1}^{|U_0|}|\pi(A)+\pi(A)+L_t| + m_p|A|. 
\]

\noindent{\emph{Term II estimates.}}
Recall that for $1\leq j\leq \ell_p$ we have chosen $\{u_j\}$ that their images generates $(W/W_0)/p(W/W_0)$, where $u_i\in U_{x_i}$.
For each $1\leq j\leq \ell_p$ and $y\in \pi(A)$, consider $a_ya_{x_j}A_0$ and $a_ya_{x_j}u_jA_0$. Both lies in $A^3$ and has $|U_0|$ elements. Since $u_j$ is central and lies in $W$, these two sets are contained in a same $W$ coset, and their $W_0$ coset differ by $u_jW_0$.  

Now we fix a $W$ coset $C$ in $N$ and let $e_C$ be the number of pairs of sets $(a_ya_{x_j}A_0,a_ya_{x_j}u_jA_0)$ that both lie in $C$. Notice that each indices $j$ appears at most once in $C$. Indeed, $y+x_j+x_0$ locates the fiber that contains $C$. Hence inside each $C$, the directions $u_j$ associated with each pair are independent modulo $p$ in $W/W_0$. If $e_C>0$, applying Lemma~\ref{lem: directions} inside each $C$ gives that it contains at least $e_C+1$ distinct $W_0$ cosets, each containing at least $|U_0|$ elements of $A^3$. Therefore
\[
(|A^3\cap C| - |U_0|)_+ \geq |U_0| e_C. 
\]
The above inequality is immediate when $e_C=0$. 

On the other hand, for every $y\in \pi(A)$, as $[N,N]$ in central, $A_0A_0a_y = a_{x_0}^2a_yU_0^2$. It has $|U_0^2|\geq |U_0|$ elements, and lies in a $W_0$-coset, and lies on $\pi^{-1}(2x_0+y)$. Let $C_y$ be the $W$-coset that contains it. If this $W_0$-coset is counted above, its contribution increases from $|U_0|$ to $|U_0^2|$. If this is a new coset then the contribution is additional. In either case we have
\[
|C_y\cap A^3| \geq |U_0|e_{C_y} + |U_0^2|. 
\]
Thus
\[
(|A^3\cap C_y| - |U_0|)_+ \geq |U_0| e_{C_y}+|U_0^2|-|U_0|.
\]
As the location $2x_0+y$ are distinct when $y$ varies, all cosets $C_y$ are distinct. Also, there are exactly $\ell_p|\pi(A)|$ pairs $(a_ya_{x_j}A_0,a_ya_{x_j}u_jA_0)$, hence
\[
\sum_C (|A^3\cap C|-|U_0|)_+ \geq |U_0|\ell_p|\pi(A)| + (|U_0^2|-|U_0|)|\pi(A)|. 
\]
Combining the two estimates for I and II finishes the proof. 
\end{proof}

We have the following corollary. Recall that $\sigma(\Omega)$ is defined by $|\Omega+\Omega|/|\Omega|$ when $\Omega$ is a subset of abelian group, and $|A^3|/|A|\leq K$, $N$ is a group of exponent $r$. 

\begin{corollary}\label{cor: two estimates}
    With notation defined in Proposition~\ref{nc:prop:joint},  we have
    \[
    |[N,N]/W_0|\leq r^{K - \frac{|U_0||\pi(A)|}{|A|} \big(\sigma(\pi(A)) + \sigma(U_0) -1 \big)}
    \]
\end{corollary}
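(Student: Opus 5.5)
The plan is to read the bound off Proposition~\ref{nc:prop:joint} after bounding each term crudely, apply it for every prime $p\mid r$, and then pass from $\F_p$-dimensions to the size of $[N,N]/W_0$ via Fact~\ref{fact: structure ab}. Throughout, write $\alpha=\frac{|U_0||\pi(A)|}{|A|}$ and
\[
E=K-\alpha\bigl(\sigma(\pi(A))+\sigma(U_0)-1\bigr),
\]
so the goal is $|[N,N]/W_0|\leq r^{E}$.

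\textbf{Step 1 (bounding the terms of \eqref{eq: joint full nil}).} For $1\leq t\leq |U_0|$ the level set $L_t$ is nonempty, since it contains $x_0$. Hence $|\pi(A)+\pi(A)+L_t|\geq |\pi(A)+\pi(A)|=\sigma(\pi(A))|\pi(A)|$, and the first sum is at least $|U_0|\,\sigma(\pi(A))\,|\pi(A)|$. The last term equals $|U_0||\pi(A)|(\sigma(U_0)-1)$, with $\sigma(U_0)=|U_0^2|/|U_0|$ since $[N,N]$ is abelian. Combining these with $|A^3|\leq K|A|$ and dividing by $|A|$ gives
\[
m_p+\alpha\,\ell_p\leq E .
\]

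\textbf{Step 2 (removing $\alpha$).} Since $U_0$ is a largest fiber, $|A|=\sum_x f(x)\leq |U_0||\pi(A)|$, so $\alpha\geq1$. Together with $\ell_p\geq0$ this yields $m_p+\ell_p\leq E$, and this holds for every prime $p$.

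\textbf{Step 3 (from ranks to size).} The group $M=[N,N]/W_0$ is abelian and finite, with exponent dividing $r$. It sits in the exact sequence
\[
0\to W/W_0\to M\to [N,N]/W\to 0 .
\]
Tensoring with $\F_p$ is right exact, so $M/pM$ is an extension of $([N,N]/W)/p([N,N]/W)$ by the image of $(W/W_0)/p(W/W_0)$. Therefore $\dim_{\F_p}(M/pM)\leq \ell_p+m_p\leq E$.

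By Fact~\ref{fact: structure ab}, which is really used for finite abelian groups as in its proof, $d(M)=\max_{p\mid |M|}\dim_{\F_p}(M/pM)\leq E$. Every prime dividing $|M|$ divides $r$, so the bound from Step 2 applies to it. Since $M$ is generated by $d(M)$ elements, each of order dividing $r$, we get $|M|\leq r^{d(M)}\leq r^{E}$, which is the claim.

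The only step needing care is Step 3, namely that $p$-rank is subadditive along the extension. Right exactness of $-\otimes\F_p$ gives exactly this, so no real obstacle arises. The rest is the bookkeeping in Step 1, in particular the use of $\alpha\geq1$ to absorb the coefficient of $\ell_p$.
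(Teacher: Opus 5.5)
Your proposal is correct and follows the paper's proof essentially step for step. You lower-bound the level-set sum via $x_0\in L_t$, divide \eqref{eq: joint full nil} by $|A|$, absorb the coefficient of $\ell_p$ using $|U_0||\pi(A)|/|A|\geq 1$, bound the $p$-rank of $[N,N]/W_0$ by $\ell_p+m_p$ through the exact sequence, and finish with Fact~\ref{fact: structure ab}. Your right-exactness argument and your explicit justification of $\alpha\geq 1$ from the maximality of $U_0$ simply spell out what the paper compresses into ``in the $p$-primary components.''
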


\begin{proof}
For every $1\leq t\leq |U_0|$, we have $x_0\in L_t$, and hence $|\pi(A)+\pi(A)+L_t|\geq|\pi(A)+\pi(A)|$. Dividing \eqref{eq: joint full nil} by $|A|$ therefore gives
\[
m_p + \frac{|U_0||\pi(A)|}{|A|}\big(\sigma(\pi(A)) + \sigma(U_0) -1 + \ell_p\big) \leq K. 
\]
The exact sequence of abelian groups
\[
 0\to W/W_0\to [N,N]/W_0 \to [N,N]/W\to0
\]
gives in the $p$-primary components that
\[
 d(([N,N]/W_0)_p)\leq\ell_p+m_p
 \leq\frac{|U_0||\pi(A)|}{|A|}\ell_p+m_p
 \leq K-\frac{|U_0||\pi(A)|}{|A|}\big(\sigma(\pi(A)) + \sigma(U_0) -1 \big).
\]
This holds for every prime $p$. The desired conclusion is implied by Fact~\ref{fact: structure ab}. 
\end{proof}

\section{Proof of the $2$-step nilpotent theorem}

In this section, we prove Theorem~\ref{thm: main 2-nil} and provide some examples. We will use the same notation used in Proposition~\ref{nc:prop:joint}, that $H=N/[N,N]$, $A_0=a_0U_0$ the largest $[N,N]$ fiber of $A$. $W_0=\langle U_0\rangle$ and $W=\langle\bigcup U_{x_i}\rangle $ where $a_{x_i}U_{x_i} = \pi^{-1}(x_i)\cap A$. 

\begin{proof}[Proof of Theorem~\ref{thm: main 2-nil}]
As $H=\langle \pi(A)\rangle$ and $W_0 = \langle U_0\rangle$, Theorem~\ref{thm:main} gives 
\[
\frac{|H|}{|\pi(A)|}\leq r^{(2+\varepsilon)\sigma(\pi(A))},\qquad \text{and}\qquad \frac{|W_0|}{|U_0|}\leq r^{(2+\varepsilon)\sigma(U_0)}. 
\]
Observe the following identity
\[
\frac{|N|}{|A|} = \frac{|U_0||\pi(A)|}{|A|} \frac{|H|}{|\pi(A)|}\frac{|W_0|}{|U_0|} |[N,N]/W_0|. 
\]
Hence by Corollary~\ref{cor: two estimates}, write $k= \sigma(\pi(A))+\sigma(U_0)$ we get
\begin{equation}\label{eq: key nil}
\log_r \frac{|N|}{|A|}\leq \log_r \frac{|U_0||\pi(A)|}{|A|} + (2+\varepsilon)k + K - (k-1)\frac{|U_0||\pi(A)|}{|A|}. 
\end{equation}
Write $\rho = |U_0||\pi(A)|/|A|$. Then $1\leq \rho  \leq K$ as $|A^2|\leq |A^3|\leq K|A|$, and Corollary~\ref{cor: two estimates} also implies
\[
2\leq k\leq \frac{K}{\rho } +1. 
\]
We now split the computation in two cases. 

If $\rho  \leq 2+\varepsilon$, the coefficient of $k$ in \eqref{eq: key nil} is nonnegative, and one can apply the upper bound on $k$ to conclude the desired bound. Here we also use that $r\geq 2$ and $\rho\geq 1$ to conclude $\log_r \rho \leq 2(\rho-1)\leq 2K(1-1/\rho)$. 

If $\rho > 2+\varepsilon$, we use $k\geq 2$ instead in \eqref{eq: key nil}, and the fact that $\log_r\rho\leq \rho-1$. In either case we have
\[
\frac{|N|}{|A|}\leq r^{(2+o(1))K+2}
\]
by letting $\varepsilon\to 0$. 
\end{proof}

The following example shows that the linear dependence on $K$ on the exponent of Theorem~\ref{thm: main 2-nil} is necessary. 

\begin{example}
Fix a prime $p\geq5$.  Let $V=\F_p^d$ and
$Z=\bigwedge^2 V$.  On $N=V\times Z$ define
\[
 (v,z)(w,t)=\left(v+w,z+t+\frac12 v\wedge w\right).
\]
Bilinearity gives associativity, and it is easy to check that this is a 2-step nilpotent group of exponent $p$, with central commutators
\[
 [(v,z),(w,t)]=(0,v\wedge w).
\]
The elements $x_i=(e_i,0)$ generate $N$, and
\[
 |N|=p^{d+\binom d2}.
\]
Let $A=\{1,x_1,\dots,x_d\}$.

Note that all positive words of length at most three are distinct. Indeed, the $V$-coordinate determines their multiplicities as $p>3$. For three distinct letters, the $Z$-coordinate records every pairwise order and hence their order in the word.  For two occurrences of one letter and one of another, the three possible positions give distinct wedge coefficients $1,0,-1$. The cases of length at most two and of a single repeated letter are immediate. Therefore
\[
 |A^3|=1+d+d^2+d^3=(d+1)(d^2+1),
\]
which implies $K=|A^3|/|A|=d^2+1$. 
Therefore
\[
 \log_p\frac{|N|}{|A|}
 =\left(\frac12+o(1)\right)K.
\]
Thus an upper bound $r^{o(K)}|A|$ is impossible even for a fixed odd prime exponent.
\end{example}

The next example shows that if we assume small doubling instead of small tripling, that is $|AA|/|A|$ being small, $|\langle A\rangle|/|A|$ can be unbounded even in the $2$-step nilpotent setting. This is a classic subgroup union a generic point example. 

\begin{example}
Fix an odd prime $p$.  Consider the 2-step nilpotent group with exponent $p$ generated by $h_1,\ldots,h_d,x,z_1,\ldots,z_d$ with $[h_i,x]=z_i$, where the $z_i$ are independent central elements and all other commutators between the displayed generators are trivial.
For example, it is realized on
$\F_p^d\times\F_p\times\F_p^d$ by
\[
 (u,t,z)(v,s,w)=(u+v,t+s,z+w+s u).
\]
Let $H=\langle h_1,\ldots,h_d\rangle$ and
$A=H\cup\{x\}$.  Then we have $|H|=p^d$, $|N|=p^{2d+1}$, and $H\cap H^x = 1$. 
The sets $H$, $Hx\cup xH$, and $\{x^2\}$ are disjoint. Hence
\[
 |A^2|=3|H|<3|A|,
 \]
 and
 \[
 \frac{|N|}{|A|}=\frac{p^{2d+1}}{p^d+1}\to\infty.
\]
\end{example}

\appendix
\section{A direct proof of Green--Tao's expansion in $\F_2^n$ theorem}

The proof of Theorem~\ref{thm:main} uses the known Ruzsa's problem for finite fields (in other word, an abelian group with exponent being a prime) as an input. In fact, our projection method is able to provide a different proof of these results, by inductively applying Proposition~\ref{prop: 1}. In the appendix, we prove the version for $\F_2^n$ as the computation is simpler, and we leave the $\F_p^n$ version for the interested readers.

\begin{theorem}
    Let $A\subseteq \F_2^n$ and $|A+A|\leq K|A|$. Then 
    \[
   | \langle A-A\rangle |\leq 2^{2K-2} |A|. 
    \]
\end{theorem}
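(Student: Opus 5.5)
Assume without loss of generality $0\in A$ and $\langle A\rangle=\F_2^n$; write $|A|=2^n/\delta$, so the goal is $\delta\le 2^{2K-2}$. The plan is induction on $n$, using Proposition~\ref{prop: 1} (in the additive form \eqref{eq: level set PKT}) with $B=A$ and a projection $\pi:\F_2^n\to\F_2^n/\langle v\rangle$ along a single direction. Every fiber of $\pi$ then has size $1$ or $2$. So the level sets are $L_1=\pi(A)$ and $L_2=\{x: \text{both points of the fiber lie in } A\}$. Write $a=|L_1|$ and $b=|L_2|$, so that $|A|=a+b$.

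\textbf{Step 1: the case $L=L_1$.} Here Proposition~\ref{prop: 1} gives $|\pi(A)+\pi(A)|\le K'|\pi(A)|$ with $K'\le K$. The fiber counting in the proof of Proposition~\ref{prop: 1} actually gives more:
\[
|A+A|\ge |\pi(A)+\pi(A)|+|\pi(A)+L_2|.
\]
I will choose $v$ so that $L_2$ is nonempty, for instance $v=a_1-a_2$ with $a_1,a_2\in A$. Then $|\pi(A)+L_2|\ge a$, so $K'a+a\le K(a+b)\le 2Ka$ at worst. That is too lossy, so I need the sharper comparison
\[
\frac{2^{n-1}}{a}\le 2^{2K'-2}, \qquad \text{with } K'\le \frac{K(a+b)}{a}-\frac{|\pi(A)+L_2|}{a}.
\]
Induction applied to $\pi(A)$ gives $2^{n}\le 2\cdot 2^{2K'-2}a$. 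I then need $2a\cdot 2^{2K'-2}\le 2^{2K-2}(a+b)$, that is, $2^{2(K-K')}\ge 2a/(a+b)$. This follows if $K-K'\ge \tfrac12\log_2\bigl(2a/(a+b)\bigr)$, which in turn holds once $K-K'\ge (a-b)/(2a)$, since $\log_2(2/(1+t))\le 1-t$ for $t\in[0,1]$. And $K-K'\ge \bigl(|\pi(A)+L_2|-Kb\bigr)/a$. So the key inequality to establish is
\[
|\pi(A)+L_2|\ge Kb+\tfrac{a-b}{2}.
\]

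\textbf{Step 2: the case $L=L_2$.} Here $|\pi(A)+L_2|\le K b$. I plan to compare $A$ with its two parts. The part $\pi^{-1}(L_2)$ is a union of full fibers, that is, a $\langle v\rangle$-periodic set, and sums with it see the full fiber structure. The idea is to replace $A$ by the set $\pi^{-1}(L_2)\cup A$ or simply $\pi^{-1}(L_2)$, and apply induction to $L_2\subseteq \F_2^{n-1}$. This requires controlling how many cosets of $\langle L_2-L_2\rangle$ meet $\pi(A)$, exactly as in Lemma~\ref{lem: bound dim}: with $q$ such cosets, $q-1+\sigma(L_2)\le K$. Each additional coset contributes dimension at most $1$, i.e.\ a factor $2\le 2^{2\cdot 1}$, so there is room to spare. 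Induction gives $|\langle L_2\rangle|\le 2^{2\sigma(L_2)-2}b$. Adding back the fiber direction and the $q-1$ extra cosets, I get $2^n\le 2\cdot 2^{q-1}\cdot 2^{2\sigma(L_2)-2}b$. This has to be compared with $2^{2K-2}(a+b)$, which should follow from $\sigma(L_2)+q-1\le K$ together with $b\le |A|/2$ handled by the slack $2^{q-1}\le 2^{2(q-1)}$. The boundary case $q=1$ with $b$ tiny is delicate.

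\textbf{Main obstacle.} The hard part is choosing the direction $v$ so that Step 1's key inequality holds, or so that the slack in Step 2 suffices. I would first try $v$ maximizing $b$, the number of representations $a_1-a_2=v$. Since $|A+A|\le K|A|$, some difference has at least $|A|/K$ representations, giving $b\ge |A|/(2K)$. If that does not close the numerics, I would fall back to a potential-function induction proving the stronger bound $|\langle A\rangle|\le 2^{2\sigma(A)-2}|A|$ with the exact $\sigma$ at each step. The exact constant $2K-2$ leaves no slack, so the base cases (small $n$, or $A$ a subspace union one point) must be checked by hand.
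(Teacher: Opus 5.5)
Your Step 1 is essentially the paper's argument. You project along one direction, use $|A+A|\ge|\pi(A)+\pi(A)|+|\pi(A)+L_2|$, induct on $\pi(A)$, and close with $\log_2(2/(1+t))\le 1-t$; that last inequality is the paper's $x-1\le\log_2 x$ on $[1,2]$. But the proposal stops exactly where an idea is needed. You reduce everything to $|\pi(A)+L_2|\ge Kb+\tfrac{a-b}{2}$ and never prove it, and the choice you suggest, $v$ maximizing $b$, works against it. The only lower bound you have is $|\pi(A)+L_2|\ge|\pi(A)|=a$. With it, the key inequality is equivalent to $Kb\le\tfrac{a+b}{2}=\tfrac{|A|}{2}$, so $b$ must be \emph{small}. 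Maximizing gives roughly $b\ge|A|/(2K)$, which is the reverse inequality, and you offer no stronger lower bound on $|\pi(A)+L_2|$. Falling back on Step 2 does not help. Your two cases are $|\pi(A)+L_2|\le Kb$ and $|\pi(A)+L_2|>Kb$, and values slightly above $Kb$ are handled by neither argument. (Step 2 itself is fine, incidentally: from $\sigma(L_2)+q-1\le K$ you get $2^n\le 2^{2K-q}b$, and for $q=1$ the needed bound is just $b\le a$, so it is not delicate.)

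The missing idea is to choose $v$ to be a nonzero element of $A+A$ with the \emph{fewest} representations $r(v)=|\{(x,y)\in A^2:x+y=v\}|$. Since $r(0)=|A|$, averaging over the $|A+A|-1$ nonzero sums gives $r(v)\le(|A|^2-|A|)/(|A+A|-1)\le|A|^2/|A+A|$. Hence $b=r(v)/2\le|A|/(2\sigma(A))$, while $L_2\neq\emptyset$ because $v\in A+A$. First normalize $K=\sigma(A)$. This is harmless because the conclusion is monotone in $K$, and it is needed because averaging controls $\sigma(A)$, not $K$. You then get $|\pi(A)+L_2|\ge a=\tfrac{a+b}{2}+\tfrac{a-b}{2}\ge Kb+\tfrac{a-b}{2}$, and Step 1 closes for every $A$ with no case split. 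This is precisely the paper's proof; the dichotomy from Proposition~\ref{prop: 1} and your Step 2 are not needed.
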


\begin{proof}
   Write $\langle A-A\rangle=\F_2^d$ and we will show
   \[
   d\leq \log_2|A| +2\sigma(A)-2
   \]
   inductively. 

   For each $u\in A+A$ as usual write $r(u)= |A\cap u-A|$. Pick $u_0$ such that $r(u_0)$ minimize $r$. Hence $r(u_0)\leq |A|^2/|A+A|$ by averaging. 
   Let $U = \langle u_0\rangle$ and consider the natural projection $\pi:\F_2^d\to \F_2^d/U$. As fibers have length $2$, we have $|\pi(A)| =|A| - r(u_0)/2$. 

   We now use the proof of Proposition~\ref{prop: 1}. Pick $a+b=u_0$ with $a,b\in A$, and for every $c\in \pi(A)$, write $[c]\in A$ the representative element, $[c]+a$ and $[c]+b$ both lie in $A+A$ and lives in a same $U$-coset. Hence at least $|\pi(A)|$ many $U$-fibers of $A+A$ are of full length. Therefore
   \[
   |A+A| - |\pi(A)+\pi(A)|\geq |\pi(A)|.
   \]
   Using this we have
   \[
   \sigma(A)-\sigma(\pi(A))\geq \frac{ |A+A| - |\pi(A)+\pi(A)|-\sigma(A)(|A|-|\pi(A)|)}{|\pi(A)|}
   \]
   Note that $|A|-|\pi(A)|=r(u_0)/2\leq |A|^2/2|A+A|$, we have
   \[
   2(\sigma(A)-\sigma(\pi(A)))\geq 2-|A|/|\pi(A)|.
   \]
   As $|A|/|\pi(A)|$ is in $[1,2]$, hence $|A|/|\pi(A)| -1 \leq \log_2(|A|/|\pi(A)|)$. Therefore,
   \[
   2(\sigma(A)-\sigma(\pi(A))) + \log_2\frac{|A|}{|\pi(A)|}\leq 1. 
   \]
   Now, apply induction hypothesis on $\pi(A)$ and $\F_2^d/U$ gives
   \[
   d-1\leq \log_2|\pi(A)| + 2\sigma(\pi(A)) -2. 
   \]
   Plug in the above estimates for $(\sigma(A)-\sigma(\pi(A))$ finishes the proof.
\end{proof}

\bibliographystyle{amsalpha}
\bibliography{reference}

\end{document}